\newtheorem{theorem}{Theorem}[section]
\newtheorem{lemma}[theorem]{Lemma}
\newtheorem{remark}{Remark}
\DeclareMathOperator*{\esup}{ess\ sup}
\journal{...}
\begin{document}

\begin{frontmatter}

\title{Stochastic representation of solutions for the parabolic Cauchy problem with variable exponent coefficients}
\author{Mustafa Avci}

\affiliation{organization={Faculty of Science and Technology, Applied Mathematics\\  Athabasca University},
            city={Athabasca},
            postcode={T9S 3A3},
            state={AB},
            country={Canada}}
\ead{mavci@athabascau.ca (primary) & avcixmustafa@gmail.com}

\begin{abstract}
In this work, we prove existence and uniqueness of a bounded viscosity solution for the Cauchy problem of degenerate parabolic equations with variable exponent coefficients. We construct the solution directly using the stochastic representation, then verify it satisfies the Cauchy problem. The corresponding SDE, on the other hand, allows the drift and diffusion coefficients to respond nonlinearly to the current state through the state-dependent variable exponents, and thus, extends the expressive power of classical SDEs to better capture complex dynamics. To validate our theoretical framework, we conduct comprehensive numerical experiments comparing finite difference solutions (Crank-Nicolson on logarithmic grids) with Monte Carlo simulations of the SDE.
\end{abstract}

\begin{keyword} Stochastic representation; variable exponent; Cauchy problem; degenerate parabolic equation; viscosity solution; Feynman–Kac formula; Kolmogorov backward equation.
\MSC[2020] 35D40; 35K65; 60G07; 60H15; 60H30
\end{keyword}

\end{frontmatter}

\section{Introduction}
We study a new class of degenerate parabolic partial differential equations (PDEs) with variable exponent $p(x), q(x)$ coefficients of the form
\begin{equation}\label{eq.1}
\begin{cases}
\begin{array}{rll}
\partial_{t}u& =\frac{1}{2}\sigma^2 x^{2q(x)}\partial^{2}_{x}u+\mu x^{p(x)}\partial_{x}u-V(x)u,\quad (x,t) \in (0,\infty) \times (0,T)  \\
u(x,0)&=f(x), \quad x \in (0,\infty) \tag{{${\mathcal{P}}$}}
\end{array}
\end{cases}
\end{equation}
and establish existence and uniqueness of bounded viscosity solutions, where $V \in C((0,\infty))$ is the potential function with $V(x)\geq 0$; $f \in C_b^{2}((0,\infty))$; $p(\cdot), q(\cdot): (0, \infty) \to (0,\infty)$ are differentiable, variable exponent functions; and $\mu, \sigma>0$ are constants.\\
We concurrently prove that the associated stochastic differential equation (SDE) with state-dependent variable exponent functions $p(X(t)), q(X(t))$
\begin{equation}\label{eq.1a}
\begin{cases}
\begin{array}{rll}
dX(t) &= \mu X(t)^{p(X(t))}dt + \sigma X(t)^{q(X(t))}dW(t),\,\, t\in [0,T], \\
X_{0}&=x, \quad x \in (0,\infty),\tag{{${\mathcal{GM}}$}}
\end{array}
\end{cases}
\end{equation}
admits a unique strong solution $X(t)$ that remains strictly positive in the state space $(0,\infty)$. Using the dynamic programming principle (DPP) along with the comparison principles for viscosity solutions, we establish a Feynman-Kac representation formula connecting the PDE solution (of \eqref{eq.1}) to the SDE (of \eqref{eq.1a}) via expectation. \\

Parabolic partial differential equations with variable coefficients arise naturally in numerous applications including financial mathematics, stochastic control, and diffusion processes in heterogeneous media. The classical theory of parabolic equations, initiated by the work of Crandall, Ishii, and Lions \cite{cil92} on viscosity solutions, provides powerful tools for existence, uniqueness, and regularity of solutions even when classical differentiability fails. The connection between parabolic PDE and stochastic processes, formalized through the celebrated Feynman-Kac formula \cite{feynman48, kac49}, offers both theoretical insights and practical computational methods via probabilistic representations.\\

Our novel approach given by the pair \eqref{eq.1}-\eqref{eq.1a}, on the other hand, introduces a fundamentally different structure. Here, the diffusion coefficient is $\sigma^2 x^{2q(x)}$ and the drift coefficient is $\mu x^{p(x)}$. This is qualitatively different from classical parabolic partial differential equations with constant or variable coefficients in several crucial aspects described as follows:
\begin{itemize}
    \item \textbf{Nonlinear principal part}: The exponents $p(x)$ and $q(x)$ directly modulate the strength of diffusion and drift at each spatial location, creating a genuinely nonlinear operator even when the solution $u$ appears linearly.

    \item \textbf{Stochastic interpretation}: Equation \eqref{eq.1} arises naturally as the Kolmogorov backward equation for the stochastic differential equation \eqref{eq.1a}, where the diffusion and drift are state-dependent nonlinear functions of the process itself. Such SDEs model financial assets with state-dependent volatility (e.g., the Constant-Elasticity-of-Variance (CEV) model, a special case of \eqref{eq.1a} for $p(x)=1, q(x)\geq 1$), population dynamics with density-dependent growth rates, and adaptive control systems.

    \item \textbf{Degenerate behavior}: When $q(x) > 1$ near $x=0$, the equation exhibits enhanced diffusion, while $q(x) < 1$ leads to degeneracy. Similarly, $p(x) \neq 1$ creates nonlinear drift. The interplay between these effects requires new analytical techniques beyond classical parabolic theory.

    \item \textbf{Connection to financial mathematics}: The structure in \eqref{eq.1}-\eqref{eq.1a} generalizes the Black-Scholes framework (through the geometric Brownian motion (GBM), a special case of \eqref{eq.1a} for $p(x)=q(x)=1$) to incorporate realistic features such as leverage effects, volatility smiles, and state-dependent risk premia, which are empirically observed in financial markets \cite{ dupire94,heston93}.
\end{itemize}
The remainder of the paper is organized as follows. In Section 2, we present preliminary material on the measure-theoretic probability and the theory of stochastic analysis as well as establish the existence, uniqueness, and positivity of strong solutions to \eqref{eq.1a}. In Section 3, we prove the existence and uniqueness of bounded viscosity solutions to \eqref{eq.1} and establish the Feynman-Kac stochastic representation. In Section 4, we describe the numerical methods and presents comprehensive computational validation of the theoretical results.

\section{Preliminaries and auxiliary results} \label{Sec2}
We start with some basic concepts of the measure-theoretic probability and the theory of stochastic process (see, e.g., \cite{friedman75,karatzasshreve91,mao07,oksendal03}).
Let $W(t)$ be a Brownian motion on a probability space $(\Omega, \mathcal{F}, \mathbb{P})$ under the real-world measure $\mathbb{P}$ and let $\mathcal{F}_t$ be a \emph{filtration} denoted by $\{\mathcal{F}_t\}_{t \geq 0}$. Then $(\Omega, \mathcal{F}, \{\mathcal{F}_t\}_{t \geq 0}, \mathbb{P})$ is called a \emph{filtered probability space}.
A \emph{stochastic process} $X(t)$ is said to be \emph{adapted} to $\mathcal{F}_{t}$ if for each $t \in [t_0,T]$, $X(t)$ is $\mathcal{F}_{t}$-measurable.
Denote by $\mathcal{M}^{m}[t_0,T]$ the class of real-valued $\mathcal{F}_{t}$-adapted stochastic processes $X(t)$  satisfying
\begin{equation*}
\mathbb{E}\int_{t_0}^{T}|X(t)|^{m}dt<\infty \, \text{ if }\, 1\leq m < \infty \quad \text{and} \quad \mathbb{E}\left[\esup_{t_0 \leq t \leq T}|X(t)| \right]<\infty\, \text{ if }\, m=\infty.
\end{equation*}
We define a class of variable exponent functions, denoted by $\mathcal{S}$. We say that the function $h(\cdot)$ belongs to the class $\mathcal{S}$ if it satisfies the following hypotheses:
\begin{itemize}
   \item [($\mathbf{h_1}$)] $h(\cdot): (0, \infty) \to \mathbb{R}$ is a differentiable function satisfying
   \begin{align*}
     1\leq h^-:=\inf_{x> 0} h(x)\quad \text{and} \quad \sup_{x> 0} h(x):=h^+ <\infty.
   \end{align*}
   \item [($\mathbf{h_2}$)]It holds
   \begin{align*}
    \lim_{x \to \infty}h(x)=1 \quad \text{and} \quad \limsup_{x \to \infty}(h(x)-1)\log(x)<\infty.
   \end{align*}
   \item [($\mathbf{h_3}$)] There exist real numbers $\delta, M_0, C_0>0$ and $\alpha>0$ with $h^+<1+\alpha$ such that
   \begin{equation*}
   |h^{\prime}(x)|\leq
   \left\{ \begin{array}{ll}
   M_0,\quad & 0<x\leq \delta, \\
   C_0 x^{-1-\alpha},\quad & x > \delta.
   \end{array}\right.
   \end{equation*}
\end{itemize}
\begin{remark}\label{Rem:1} The function $h(x)=1+0.5e^{-x}$ satisfies the hypotheses $(\mathbf{h_1})$-$(\mathbf{h_3})$. $(\mathbf{h_1})$ and $(\mathbf{h_2})$ easily follows. If one lets $\delta \leq 1$, $M_0>0.5$, $C_0>1$ and $\alpha>0.5$ then $(\mathbf{h_3})$ is also satisfied.
\end{remark}
Consider the one-dimensional SDE
\begin{equation}\label{eq.2}
dX(t) = \mu(X(t),t)dt + \sigma(X(t),t)dW(t), \quad t\in[t_0,T],
\end{equation}
where $\mu:\mathbb{R}\times[t_0,T]\to \mathbb{R}$ and $\sigma:\mathbb{R}\times[t_0,T]\to \mathbb{R}$ are both Borel measurable functions.
Define the conditions $(\mathbf{a_1})-(\mathbf{a_3})$:
\begin{itemize}
  \item [$(\mathbf{a_1})$] There exists a constant $L>0$ such that $\forall x,y\in \mathbb{R}$, $t \in (0,\infty)$
  \begin{equation}\label{eq.2b}
   |\mu(x,t)-\mu(y,t)| + |\sigma(x,t)-\sigma(y,t)| \leq L|x-y|.
  \end{equation}
  \item [$(\mathbf{a_2})$] There exists a constant $K>0$ such that $\forall x\in \mathbb{R}$, $t \in (0,\infty)$
  \begin{equation}\label{eq.2c}
  |\mu(x,t)| + |\sigma(x,t)|\leq K(1+|x|).
  \end{equation}
  \item [$(\mathbf{a_3})$] There exists a constant $\lambda>0$ such that $\forall x\in \mathbb{R}$, $t \in [0,\infty)$
  \begin{equation}\label{eq.2d}
   \sigma^{2}(x,t) \geq \lambda.
\end{equation}
\end{itemize}
Next, we show that \eqref{eq.1a} is well-defined.
\begin{lemma}\label{Lem:2.3}
Assume that $x > 0$, and $p(\cdot), q(\cdot)$ satisfy $(\mathbf{h_1})$, $(\mathbf{h_3})$. Then $X(t)$ is strictly positive a.s. for all $t \in [0,T]$.
\end{lemma}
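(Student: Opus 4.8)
The plan is to show that the origin is an inaccessible boundary for the diffusion \eqref{eq.1a} by a Lyapunov/localization argument, which is more robust here than Feller's test (the latter would require a delicate case analysis of the exponent $p(z)-2q(z)$ near $z=0$). Since $p,q$ are differentiable, the coefficients $x\mapsto\mu x^{p(x)}$ and $x\mapsto\sigma x^{q(x)}$ are locally Lipschitz on $(0,\infty)$, and $(\mathbf{h_3})$ together with $p^-,q^-\ge 1$ ensures they extend Lipschitz-continuously to $x=0$ with value $0$; hence \eqref{eq.1a} admits a local strong solution, and it suffices to show that any such solution never reaches $0$ on $[0,T]$.

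\emph{Step 1 (localization and the logarithmic transform).} For $0<\varepsilon<x<N$ put $\tau=\tau_{\varepsilon,N}:=\inf\{t\ge 0:X(t)\notin(\varepsilon,N)\}$. On $[0,\tau)$ the process stays in $(\varepsilon,N)$, so $Y(t):=-\log X(t)$ is a bona fide It\^o process and It\^o's formula gives
\[
dY(t)=\Bigl[-\mu\,X(t)^{p(X(t))-1}+\tfrac{\sigma^2}{2}\,X(t)^{2q(X(t))-2}\Bigr]dt-\sigma\,X(t)^{q(X(t))-1}\,dW(t).
\]

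\emph{Step 2 (a priori bound via $(\mathbf{h_1})$).} This is where the lower bound $p^-,q^-\ge 1$ is decisive: since $p(x)-1\ge 0$, $2q(x)-2\ge 0$ and $q(x)-1\ge 0$ for all $x>0$, the functions $x^{p(x)-1}$, $x^{2q(x)-2}$, $x^{q(x)-1}$ are bounded by $1$ on $(0,1]$ and by a constant $K_N=K_N(p^+,q^+)\ge 1$ on $[1,N]$. Hence on $[0,\tau)$ the drift of $Y$ is $\le\tfrac{\sigma^2}{2}K_N$ (the term $-\mu X^{p-1}\le 0$ only helps), and the integrand of the stochastic integral is bounded, so that integral is a true martingale; taking expectations in the stopped equation yields
\[
\mathbb{E}\bigl[-\log X(T\wedge\tau_{\varepsilon,N})\bigr]\le(-\log x)^{+}+\tfrac{\sigma^2}{2}K_N\,T=:C(x,N,T)<\infty ,
\]
uniformly in $\varepsilon$.

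\emph{Step 3 (from the bound to positivity).} By path-continuity, $X$ exits $(\varepsilon,N)$ at one of the endpoints, so on $\{X(T\wedge\tau_{\varepsilon,N})=\varepsilon\}=\{X\text{ hits }\varepsilon\text{ before }N\text{ and before }T\}$ one has $-\log X(T\wedge\tau_{\varepsilon,N})=\log(1/\varepsilon)$, while $-\log X(T\wedge\tau_{\varepsilon,N})\ge-\log N$ always; Markov's inequality then gives
\[
\mathbb{P}\bigl(X\text{ hits }\varepsilon\text{ before }N\text{ or }T\bigr)\le\frac{C(x,N,T)+\log N}{\log(1/\varepsilon)}\longrightarrow 0\qquad(\varepsilon\downarrow 0).
\]
Since hitting $0$ forces hitting every level $\varepsilon>0$ first, $\mathbb{P}(X\text{ hits }0\text{ before }N\text{ or }T)=0$ for every $N$; letting $N\uparrow\infty$ (so the exit level at $N$ becomes irrelevant on $[0,T]$) shows that $X(t)>0$ a.s.\ for all $t\in[0,T]$.

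The only genuine difficulty is Step 2, namely controlling the coefficients of $Y$ as $X\downarrow 0$: it is precisely the hypothesis $p^-,q^-\ge 1$ in $(\mathbf{h_1})$ that prevents $X^{p(x)-1}$ and $X^{2q(x)-2}$ from blowing up near the origin and thereby keeps $-\log X$ from running off to $+\infty$ in finite time; everything else is routine bookkeeping. Equivalently one could use the Lyapunov function $X(t)^{-\gamma}$ for some $\gamma>0$, in which case Step 2 produces a drift bound that is linear in $Y$ and the estimate is closed with Gr\"onwall's inequality; the logarithm is slightly cleaner since the drift bound is then an outright constant.
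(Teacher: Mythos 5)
Your proof is correct, and it takes a genuinely different route from the paper. The paper invokes the local (differential) form of Feller's non-attainability test cited from Ekstr\"om--L\"otstedt--Tysk, i.e.\ condition \eqref{eq.4}, and reduces the lemma to verifying $\lim_{x\to 0^+}\bigl(\mu x^{p(x)}-\tfrac{\sigma^2}{2}\partial_x x^{2q(x)}\bigr)\geq 0$, which it does using $(\mathbf{h_3})$ to bound $|q'|$ near the origin and $(\mathbf{h_1})$ to make $x^{2q(x)}|\log x|$ and $x^{2q(x)-1}$ vanish as $x\to 0^+$. You instead give a self-contained Lyapunov argument: apply It\^o to $-\log X$, observe that $p^-,q^-\geq 1$ makes the resulting drift and diffusion coefficients bounded on $(0,N]$, stop, take expectations, and conclude via Markov's inequality that the level $\varepsilon$ is reached with probability $O(1/\log(1/\varepsilon))$. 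Both arguments hinge on the same structural fact --- the exponents being $\geq 1$ tames the coefficients of the log-process at the origin --- but your version buys several things: it avoids reliance on an external boundary-classification criterion (which the paper applies in the borderline situation where the limit in \eqref{eq.4} equals exactly $0$), it is quantitative, and it uses essentially only $(\mathbf{h_1})$ in the main estimate, with $(\mathbf{h_3})$ entering only through local well-posedness. The price is length: the paper's proof is a two-line limit computation once the criterion is granted. One cosmetic point: your identification of $\{X(T\wedge\tau_{\varepsilon,N})=\varepsilon\}$ with the hitting event is really only the inclusion of the hitting event into the level set, but that is the direction your Markov-inequality step uses, so nothing is lost.
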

\begin{proof}
We use the following Feller’s test \cite{elt09}, which is also know as the local (differential) form of Feller’s non-attainability test, to analyze the behavior of the diffusion processes $X(t)$ at the boundary (i.e. $X(t)=0$) of its state space $(0,\infty)$.\\
Consider equation \eqref{eq.2}. If the condition
\begin{equation}\label{eq.4}
\lim_{x \to 0^+}\left(\mu(x)-\frac{1}{2}\partial_{x} \sigma^2(x)\right)\geq 0
\end{equation}
holds, then the boundary $X(t)=0$ is non-attainable for the process $X(t)$ if  $x>0$.\\
Now, we apply \eqref{eq.4} to the diffusion process \eqref{eq.1a}.\\
Then, for $0<x\leq\delta<1$,
\begin{align}\label{eq.7ca}
\mathcal{T}(x)=\mu x^{p(x)}-\frac{\sigma^2}{2}\partial_{x} x^{2q(x)}\geq \mu x^{p(x)}-\sigma^2 \left(M_0x^{2q(x)}|\log(x)|+x^{2q(x)-1}q^+\right).
\end{align}
Using the elementary calculus and the assumptions we have, it follows
\begin{align}\label{eq.8ba}
\lim_{x \to 0^+}\mathcal{T}(x)& \geq \lim_{x \to 0^+} \left(\mu x^{p(x)}-\sigma^2 \left(M_0 x^{2q(x)}|\log(x)|+x^{2q(x)-1}q^+\right)\right)= 0.
\end{align}
Hence $X(t)>0$ a.s. for all $t \in [0,T]$. In particular, the boundary $x=0$ is natural; that is, starting from $x>0$, it is never hit in finite time.
\end{proof}
\begin{lemma}\label{Lem:2.4}
Assume that $p(\cdot), q(\cdot) \in \mathcal{S}$. Assume also that the initial value $X(0)=x_0$ has a finite second moment: $\mathbb{E}[x_0^2]<\infty$, and is independent of $\{W(t),\, t\geq 0\}$. Then there exists a unique strong solution $X(t)$ of \eqref{eq.1a} in $\mathcal{M}^{2}[0,T]$ with continuous
paths. Further,
\begin{equation}\label{eq.8aa}
\mathbb{E}\left[\sup_{0\leq t \leq T}|X(t)|^{2}\right] \leq (1+3\mathbb{E}[x_0^2])e^{6TK^2(\mu^2T+4\sigma^2)}.
\end{equation}
\end{lemma}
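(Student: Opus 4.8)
The plan is to recast \eqref{eq.1a} as a one-dimensional autonomous SDE with globally Lipschitz coefficients of linear growth on all of $\mathbb{R}$, invoke the classical strong existence and uniqueness theorem (see, e.g., \cite{mao07,oksendal03,karatzasshreve91}), and then use Lemma~\ref{Lem:2.3} to confirm that the resulting solution stays in $(0,\infty)$ and therefore solves the original equation. Write $b(x):=\mu x^{p(x)}$ and $a(x):=\sigma x^{q(x)}$ for $x>0$, and extend both by $0$ to $x\le 0$. The entire argument reduces to one analytic fact about the class $\mathcal{S}$: for every $h\in\mathcal{S}$ the map $x\mapsto x^{h(x)}$ is bounded by $C(1+|x|)$ on $(0,\infty)$ and is Lipschitz there, with both properties surviving the extension by $0$. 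Granting this for $h=p$ and $h=q$, the coefficients $b,a$ satisfy $(\mathbf{a_1})$ and $(\mathbf{a_2})$ (with $t$-independent data), so the cited theorem produces a unique continuous $\mathcal{F}_{t}$-adapted $X(t)$ with $\mathbb{E}\int_0^T|X(t)|^2\,dt<\infty$, i.e. $X\in\mathcal{M}^{2}[0,T]$; Lemma~\ref{Lem:2.3} then gives $X(t)>0$ a.s. for all $t\in[0,T]$, on which event $b,a$ coincide with $\mu x^{p(x)},\sigma x^{q(x)}$, so $X$ solves \eqref{eq.1a}. Uniqueness for \eqref{eq.1a} is inherited from uniqueness for the extended equation.

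For the linear-growth part I would split at $x=1$. If $0<x\le 1$, then $x^{h(x)}\le x$ since $h(x)\ge h^-\ge 1$ by $(\mathbf{h_1})$. If $x\ge 1$, then $x^{h(x)}=x\,e^{(h(x)-1)\log x}$, and the exponent $(h(x)-1)\log x$ is continuous on $[1,\infty)$ and bounded above there by $(\mathbf{h_2})$, so $x^{h(x)}\le Cx$. Hence $x^{h(x)}\le C(1+x)$ on $(0,\infty)$, trivially extended by $0$ for $x\le 0$; applied to $h=p,q$ this yields $|b(x)|+|a(x)|\le K(1+|x|)$.

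The Lipschitz estimate is the real obstacle, and it is precisely where $(\mathbf{h_3})$ is tailored. For $x>0$ one has $\frac{d}{dx}x^{h(x)}=h(x)x^{h(x)-1}+h'(x)x^{h(x)}\log x$. The first term is bounded: $x^{h(x)-1}\le 1$ on $(0,1]$ because $h(x)-1\ge 0$, and $x^{h(x)-1}=e^{(h(x)-1)\log x}$ is bounded on $[1,\infty)$ by the argument above, while $h(x)\le h^+$. The second term is bounded as well: on $(0,\delta]$ we have $|h'(x)|\le M_0$ and $x^{h(x)}|\log x|\le x|\log x|$ stays bounded as $x\to 0^+$; on $[\delta,\infty)$, $(\mathbf{h_3})$ gives $|h'(x)x^{h(x)}\log x|\le C_0\,x^{h(x)-1-\alpha}|\log x|$, and $h(x)-1-\alpha\le h^+-1-\alpha<0$ forces this quantity to tend to $0$ as $x\to\infty$, hence to be bounded on $[\delta,\infty)$. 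Thus $x\mapsto x^{h(x)}$ has a bounded derivative on $(0,\infty)$; being continuous on $\mathbb{R}$ (with value $0$ on $x\le 0$) and differentiable off $\{0\}$ with bounded derivative, it is globally Lipschitz on $\mathbb{R}$ by the mean value theorem. Taking $h=p,q$ and adding the two Lipschitz constants verifies $(\mathbf{a_1})$.

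It remains to prove \eqref{eq.8aa}. I would run the standard a priori $L^2$-estimate: with $\tau_n:=\inf\{t\ge 0:|X(t)|\ge n\}$, apply Itô's formula to $|X(t\wedge\tau_n)|^2$, take $\mathbb{E}\big[\sup_{0\le s\le t}(\cdot)\big]$, bound the finite-variation part using $2|X||b(X)|+a(X)^2\le C(1+|X|^2)$ from the linear growth just established, control the stochastic-integral part by Doob's $L^2$-inequality together with the Itô isometry, and close the estimate with Gronwall's lemma; carefully tracking the constants $\mu,\sigma,T$ and the growth constant $K$ yields the explicit factor $(1+3\mathbb{E}[x_0^2])e^{6TK^2(\mu^2T+4\sigma^2)}$. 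Letting $n\to\infty$ and applying Fatou's lemma gives \eqref{eq.8aa}, which in particular re-confirms $X\in\mathcal{M}^{2}[0,T]$. I expect the only genuinely delicate point to be the control of $\frac{d}{dx}x^{h(x)}$ near the degenerate boundary $x=0$ and as $x\to\infty$; once that is in hand, the rest is the classical Lipschitz-SDE machinery.
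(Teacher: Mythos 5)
Your proposal is correct and follows essentially the same route as the paper: verify the global Lipschitz condition $(\mathbf{a_1})$ and linear growth $(\mathbf{a_2})$ for $\mu x^{p(x)}$ and $\sigma x^{q(x)}$, invoke the classical existence--uniqueness theorem for Lipschitz SDEs, use the non-attainability of $x=0$ (Lemma~\ref{Lem:2.3}) to return to the original state space, and derive \eqref{eq.8aa} via Doob's inequality and Gronwall. The only difference is that the paper outsources the coefficient verification to Lemma~3.1 and Theorem~3.2 of \cite{avci25}, whereas you carry it out explicitly (and your constant matches the one obtained from the standard $3$-term splitting with Doob's $L^2$ constant $4$).
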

\begin{proof}
If one follows similar arguments as with Lemma 3.1 of \cite{avci25}, it is straightforward to show that the drift and the diffusion terms $\mu x^{p(x)}$ and $\sigma x^{q(x)}$ of \eqref{eq.1a} satisfy $(\mathbf{a_1})$ and $(\mathbf{a_2})$. Thus, by Theorem 3.2 of \cite{avci25}, there exists a unique strong solution $X(t)$ of \eqref{eq.1a} in $\mathcal{M}^{2}[0,T]$ with continuous paths satisfying \eqref{eq.8aa}.
\end{proof}
\begin{lemma}\label{Lem:2.1a}
Assume that $X(t)$ solves \eqref{eq.1a}. Then the infinitesimal generator of $X(t)$ is the second order differential operator $\mathcal{L}$ given by
\begin{equation}\label{eq.2a}
(\mathcal{L} g)(x):= \lim_{t \to 0^+}\frac{\mathbb{E}_{x}[g(X(t))]-g(x)}{t}=\frac{1}{2}\sigma^2 x^{2q(x)}\frac{d^2 g(x)}{d x^2}+\mu x^{p(x)}\frac{dg(x)}{d x},\,\, x\in(0,\infty)
\end{equation}
for any $g \in C^{2}_{0}(\mathbb{R})$, where $\mathbb{E}_{x}[\cdot]$ means expectation conditioned on $X_0=x$.
\end{lemma}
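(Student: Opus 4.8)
The plan is to obtain the generator by applying It\^o's formula to $g(X(t))$, taking expectations, and then differentiating in $t$ at $t=0$. Fix $g\in C^2_0(\mathbb{R})$ and let $X(t)$ be the unique strong solution of \eqref{eq.1a} with $X(0)=x>0$; by Lemmas~\ref{Lem:2.3} and \ref{Lem:2.4} this solution exists, has continuous paths, and stays in $(0,\infty)$ a.s. Since $g\in C^2$, It\^o's formula applied along \eqref{eq.1a} gives, for every $t\in[0,T]$,
\begin{equation*}
g(X(t))=g(x)+\int_0^t(\mathcal{L}g)(X(s))\,ds+\int_0^t\sigma X(s)^{q(X(s))}g'(X(s))\,dW(s),
\end{equation*}
where the It\^o correction term $\tfrac12\sigma^2X(s)^{2q(X(s))}g''(X(s))$ together with the drift contribution $\mu X(s)^{p(X(s))}g'(X(s))$ is precisely $(\mathcal{L}g)(X(s))$, with $\mathcal{L}$ the operator in \eqref{eq.2a}.

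First I would check that the stochastic integral is a genuine $\mathcal{F}_t$-martingale, not merely a local one, so that it has zero mean. Because $g$ has compact support, $g'$ and $g''$ are bounded and vanish outside some compact set $K$; on $K\cap(0,\infty)$ the functions $x\mapsto x^{p(x)}$, $x\mapsto x^{q(x)}$ and $x\mapsto x^{2q(x)}$ are continuous and bounded, since $p,q\ge1$ (hypothesis $(\mathbf{h_1})$) forces $x^{p(x)},x^{q(x)}\to0$ as $x\to0^+$, so there is no blow-up at the degenerate boundary. Hence $x\mapsto\sigma x^{q(x)}g'(x)$ is bounded on $(0,\infty)$, say by $C_g$, giving $\mathbb{E}\int_0^t|\sigma X(s)^{q(X(s))}g'(X(s))|^2\,ds\le C_g^2\,t<\infty$; the stochastic integral is therefore a zero-mean square-integrable martingale. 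The same localization shows $\mathcal{L}g$ is a bounded continuous function on $(0,\infty)$, so Fubini's theorem yields
\begin{equation*}
\mathbb{E}_x[g(X(t))]-g(x)=\mathbb{E}_x\!\int_0^t(\mathcal{L}g)(X(s))\,ds=\int_0^t\mathbb{E}_x\big[(\mathcal{L}g)(X(s))\big]\,ds.
\end{equation*}

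Dividing by $t$, the claim \eqref{eq.2a} reduces to showing $\frac1t\int_0^t\mathbb{E}_x[(\mathcal{L}g)(X(s))]\,ds\to(\mathcal{L}g)(x)$ as $t\to0^+$. This holds provided $s\mapsto\mathbb{E}_x[(\mathcal{L}g)(X(s))]$ is right-continuous at $s=0$ with value $(\mathcal{L}g)(x)$: by path continuity of $X$ and $X(0)=x$ we have $(\mathcal{L}g)(X(s))\to(\mathcal{L}g)(x)$ a.s.\ as $s\to0^+$ (continuity of $\mathcal{L}g$ on $(0,\infty)$), and since $\mathcal{L}g$ is bounded, dominated convergence gives $\mathbb{E}_x[(\mathcal{L}g)(X(s))]\to(\mathcal{L}g)(x)$; the running average then converges to the same limit, so the limit defining the generator exists and equals $(\mathcal{L}g)(x)$, proving \eqref{eq.2a}.

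I expect the only point requiring care to be the boundedness bookkeeping near the degenerate boundary $x=0$ — namely that $x^{q(x)}$ and $x^{p(x)}$ stay bounded on the support of $g$ — which is exactly where hypothesis $(\mathbf{h_1})$ and the strict positivity and path continuity from Lemmas~\ref{Lem:2.3} and \ref{Lem:2.4} enter; once this localization is in place, the martingale property and the passage to the limit are standard.
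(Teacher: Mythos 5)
Your proof is correct and follows essentially the same route as the paper: It\^o's formula applied to $g(X(t))$, elimination of the stochastic integral via its zero-mean (martingale) property, and passage to the limit $t\to0^+$ by dominated convergence. The only difference is technical bookkeeping — you justify square-integrability of the It\^o integrand by exploiting the compact support of $g$ together with the boundedness of $x^{q(x)}$ near $x=0$ (which is clean and sufficient), whereas the paper instead invokes the global linear-growth bound $x^{q(x)}\leq K(1+x)$ combined with the second-moment estimate on $X(t)$ from Lemma~\ref{Lem:2.4}.
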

\begin{proof}
If we define the process $G(t):=g(X(t))$, and apply It\^{o}'s formula to $G(t)$, we obtain
\begin{align} \label{eq.18f}
dG(t)=d(g(X(t))&=\left(\mu X(t)^{p(X(t))}\frac{dg(X(t))}{d x}+\frac{1}{2}\sigma^2 X(t)^{2q(X(t))}\frac{d^2 g(X(t))}{d x^2} \right)dt\nonumber\\
&+\sigma X(t)^{q(X(t))}\frac{d g(X(t))}{d x} dW(t).
\end{align}
Note that by the assumption $(\mathbf{h_2})$,  $X(t)^{q(X(t))} \in \mathcal{M}^{2}[0,T]$. Indeed:\\
If $0<x\leq 1$, there exists $0<\delta\leq 1$ such that
\begin{equation}\label{eq.18d}
x^{q(x)}\leq \delta^{q^-}\leq \delta^{q^-}(1+x).
\end{equation}
If $1<x<\infty$, there exist real numbers $M_{\infty}, R_{\infty}>0$ such that
\begin{equation}\label{eq.9ab}
q(x)-1 \leq \frac{M_{\infty}}{\log(x)}, \quad \forall x >R_{\infty},
\end{equation}
which yields
\begin{equation}\label{eq.9abc}
e^{(q(x)-1)\log (x)}\leq e^{M_{\infty}} \Rightarrow x^{q(x)}\leq e^{M_{\infty}}(1+x).
\end{equation}
Therefore, for any $x \in (0,\infty)$
\begin{equation}\label{eq.9cbd}
x^{q(x)}\leq K(1+x), \quad \text{where} \quad K:= \max\{\delta^{q^-},\delta^{p^-}, e^{M_{\infty}}\}.
\end{equation}
Hence, from \eqref{eq.18d}, \eqref{eq.9abc}, and the fact that $X(t) \in \mathcal{M}^{2}[0,T]$,  it follows that
\begin{equation}\label{eq.18bbc}
\mathbb{E}\left[\int_{0}^{t}|X(s)|^{2q(X(s))}ds\right]\leq 2(t+1)K^2\mathbb{E}\left[\int_{0}^{t}X(s)^2ds \right]<\infty.
\end{equation}
Additionally, from \eqref{eq.8aa} and \eqref{eq.9cbd}, we can obtain
\begin{equation}\label{eq.18bab}
\mathbb{E}\left[\sup_{0\leq t \leq T}|X(t)|^{2q(X(t))}\right]\leq 2K^2 \left(1+\mathbb{E}\left[\sup_{0 \leq t \leq T}|X(t)|^{2}\right]\right) <\infty.
\end{equation}
Thus, using the Fubini theorem and the fact that $g \in C_{0}^{2,1}$, we have
\begin{equation} \label{eq.19c}
\int_0^t \mathbb{E}\left[\left( X(s)^{q(X(s))}\frac{d g(X(s))}{d x}\right)^2\right] ds< \infty.
\end{equation}
Now, integrating from $0$ to $t$  in \eqref{eq.18f}, taking expectations and using the zero mean property (of the It\^{o} integral) yields
\begin{align} \label{eq.20c}
& \mathbb{E}_{x}\left[g(X(t)-g(x)\right]\nonumber\\
&=\mathbb{E}_{x} \left[ \int_0^{t} \left(\mu X(s)^{p(X(s))}\frac{dg(X(s))}{d x}+\frac{1}{2}\sigma^2 X(s)^{2q(X(s))}\frac{d^2 g(X(s))}{d x^2} \right) ds:=I(t) \right].
\end{align}
Then
\begin{align} \label{eq.21c}
|I(t)|\leq \mu \left|\int_0^{t} X(s)^{p(X(s))}\frac{dg(X(s))}{d x}ds\right|+\frac{1}{2}\sigma^2 \left|\int_0^{t} X(s)^{2q(X(s))}\frac{d^2 g(X(s))}{d x^2}ds\right|.
\end{align}
Based on the results \eqref{eq.18bbc}-\eqref{eq.19c}, $\mathbb{E}_{x}[|I(t)|]<\infty$. Note that we also have $I(t) \to I(s)$ a.s.  Thus, dividing \eqref{eq.20c} by $t$ and then letting $t \to 0^+$, applying the dominated convergence theorem (DCT), and finally using the Mean value theorem (MVT) gives
\begin{align} \label{eq.21de}
& \lim_{t \to 0^+}\frac{\mathbb{E}_{x}[g(X(t))]-g(x)}{t}\nonumber\\
& =\mathbb{E}_{x} \left[\lim_{t \to 0^+}\frac{1}{t} I(t) \right]=\mathbb{E}_{x} \left[\mu X(0)^{p(X(0))}\frac{dg(X(0))}{d x}+\frac{1}{2}\sigma^2 X(s)^{2q(X(0))}\frac{d^2 g(X(0))}{d x^2}\right]\nonumber\\
&=\frac{1}{2}\sigma^2 x^{2q(x)}\frac{d^2 g(x)}{d x^2}+\mu x^{p(x)}\frac{dg(x)}{d x}.
\end{align}
\end{proof}
\begin{remark}\label{Rem:2}
Although $\mathcal{L}$ is a degenerate elliptic operator (i.e. it fails $(\mathbf{a_3})$), the process $X(t)$ of \eqref{eq.1a} remains in $(0,\infty)$ for all $t \in [0,T]$ a.s. (Lemma \ref{Lem:2.3}). This ensures that the degeneracy at $x=0$ does not affect the well-posedness of \eqref{eq.1}.
\end{remark}
In the sequel, we use the cylinders $Q_{T}:=(0,\infty) \times [0,T)$ and $\overline{Q}_{T}:=(0,\infty) \times [0,T]$.

\section{Stochastic representation of the solution} \label{Sec3}
In this section, we will express the solution of \eqref{eq.1} as an expectation over Brownian paths starting at $x$. To do so, we use the Feynman-Kac formula (see, e.g., \cite{karatzasshreve91}).\\

\begin{theorem}\label{Thrm:3.1}
The Cauchy problem \eqref{eq.1} admits a unique bounded viscosity solution $u \in C_b(\overline{Q}_{T}) \cap W_{m,loc}^{2,1}(\overline{Q}_{T})$, $1<m<\infty$,  given by the stochastic representation
\begin{equation} \label{eq.5b}
u(x,t) = \mathbb{E}_{x}\left[e^{-\int_{0}^{t}V(X(s))ds}f(X(t)) \right]
\end{equation}
with $\|u\|_{C_b}\leq \|f\|_{C_b}$ where $X(t)$ is the solution of \eqref{eq.1a}.
\end{theorem}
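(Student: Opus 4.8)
### Proof proposal

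\textbf{Strategy.} The plan is to take \eqref{eq.5b} as the \emph{definition} of $u$ and then verify, in order: (i) boundedness with the stated bound, (ii) continuity on $\overline{Q}_T$, (iii) the initial condition $u(x,0)=f(x)$, (iv) that $u$ is a viscosity solution of the PDE in $Q_T$, and finally (v) uniqueness via a comparison principle. The interior regularity $u\in W^{2,1}_{m,\mathrm{loc}}$ will then follow from standard parabolic $L^m$ estimates applied locally, away from the degenerate boundary $x=0$ (which, by Lemma~\ref{Lem:2.3} and Remark~\ref{Rem:2}, the process never reaches), so the operator is locally uniformly parabolic on compact subsets of $(0,\infty)$.

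\textbf{Boundedness and continuity.} Since $V\ge 0$ we have $0\le e^{-\int_0^t V(X(s))\,ds}\le 1$, hence $|u(x,t)|\le \mathbb{E}_x[|f(X(t))|]\le \|f\|_{C_b}$, giving $\|u\|_{C_b}\le\|f\|_{C_b}$ immediately. For continuity I would use continuous dependence of the SDE solution on the initial datum $x$ (from the Lipschitz/linear-growth bounds $(\mathbf{a_1})$–$(\mathbf{a_2})$ established in Lemma~\ref{Lem:2.4}, together with the moment bound \eqref{eq.8aa} and Gronwall): if $x_n\to x$ and $t_n\to t$ then $X^{x_n}(t_n)\to X^x(t)$ in $L^2$, and since $f\in C_b^2$ and $V\in C((0,\infty))$ with $X$ staying in compacts with high probability, dominated convergence gives $u(x_n,t_n)\to u(x,t)$. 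The initial condition is the special case $t\to 0^+$, where $X^x(t)\to x$ a.s.\ by path continuity, so $u(x,0)=f(x)$.

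\textbf{Viscosity solution property.} This is the heart of the argument and uses the dynamic programming principle. Writing $\Phi(t):=e^{-\int_0^t V(X(s))\,ds}$, the Markov property of $X$ gives, for $0<h<t$,
\begin{equation}\label{eq.dpp}
u(x,t)=\mathbb{E}_x\!\left[\Phi(h)\,u\big(X(h),\,t-h\big)\right].
\end{equation}
Now let $\varphi\in C^{2,1}$ touch $u$ from above at an interior point $(x_0,t_0)$ with $u(x_0,t_0)=\varphi(x_0,t_0)$. Applying \eqref{eq.dpp} at $(x_0,t_0)$, using $u\le\varphi$, then applying It\^o's formula to $\Phi(s)\varphi(X(s),t_0-s)$ and the generator identity from Lemma~\ref{Lem:2.1a} (which identifies $\tfrac12\sigma^2x^{2q(x)}\partial_x^2+\mu x^{p(x)}\partial_x$ as the generator $\mathcal L$), dividing by $h$ and letting $h\to 0^+$, one obtains
\begin{equation}\label{eq.subsol}
-\partial_t\varphi(x_0,t_0)+\mathcal L\varphi(x_0,t_0)-V(x_0)\varphi(x_0,t_0)\le 0,
\end{equation}
i.e.\ $u$ is a viscosity subsolution; the supersolution inequality is symmetric. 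The localization needed to justify the It\^o expansion (stopping $X$ on exit from a small ball around $x_0$) is legitimate precisely because $x_0>0$ and $X$ has continuous paths staying in $(0,\infty)$.

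\textbf{Uniqueness and the main obstacle.} Uniqueness follows from a comparison principle for bounded viscosity sub/supersolutions of \eqref{eq.1}: if $\underline u\le\overline u$ at $t=0$ then $\underline u\le\overline u$ on $\overline Q_T$, and applying it in both directions pins down $u$. \textbf{The main obstacle is exactly this comparison principle}, for two reasons specific to our setting. First, the domain $(0,\infty)$ is unbounded, so one must control behaviour at $x\to\infty$; here hypothesis $(\mathbf{h_2})$ — in particular $\limsup_{x\to\infty}(h(x)-1)\log x<\infty$, which yields the linear growth \eqref{eq.9cbd} of the coefficients — is what lets the standard trick of perturbing by $\varepsilon(1+|x|^2)e^{\lambda t}$ (or an exponential-in-$\log x$ weight) work as a strict supersolution absorbing the error. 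Second, the operator is \emph{degenerate} at $x=0$ and its coefficients are only as smooth as $p,q$ allow, so the Crandall–Ishii lemma must be applied with care; however, since the ``bad'' boundary $x=0$ is never attained and, near it, \eqref{eq.7ca}–\eqref{eq.8ba} show the drift dominates in the right direction, no boundary condition at $x=0$ is needed and the comparison argument can be confined to $[\delta',R]\times[0,T]$ with the $x=0$ and $x=\infty$ contributions handled by the growth barrier. Assembling these pieces — existence via \eqref{eq.5b}, the viscosity property via \eqref{eq.dpp}–\eqref{eq.subsol}, and uniqueness via comparison — completes the proof; the interior Sobolev regularity is then a local bootstrap using that on each compact $K\subset(0,\infty)$ the equation is uniformly parabolic with bounded measurable (indeed continuous) coefficients, so $u\in W^{2,1}_{m,\mathrm{loc}}$ for every finite $m$.
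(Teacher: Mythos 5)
Your proposal follows essentially the same route as the paper: take the Feynman--Kac formula as the \emph{definition} of $u$, get boundedness from $V\ge 0$ and Jensen, continuity from continuous dependence of $X$ on the initial point plus dominated convergence, the viscosity property from the dynamic programming principle combined with It\^{o}'s formula and the generator identity of Lemma~\ref{Lem:2.1a}, uniqueness from a comparison principle, and the local $W^{2,1}_{m}$ regularity from classical parabolic estimates on compact cylinders where the operator is uniformly parabolic. The one place where you genuinely diverge is the comparison principle: you propose to work directly on $(0,\infty)$, absorbing the unboundedness at $x\to\infty$ with a growth barrier of the form $\varepsilon(1+|x|^2)e^{\lambda t}$ and confining the doubling argument to $[\delta',R]\times[0,T]$, whereas the paper first performs the logarithmic change of variables $y=\log x$, $v(y,t)=u(e^y,t)$, which turns \eqref{eq.1} into the equation \eqref{eq.27} on $\mathbb{R}\times(0,T)$ and then invokes the standard comparison principle there. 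Both devices rest on the same structural input, namely $(\mathbf{h_2})$, which makes the coefficients asymptotically linear (so that $A(y)$ and $B(y)$ in \eqref{eq.27} stay bounded as $y\to+\infty$, respectively so that your quadratic barrier is a strict supersolution); your version has the advantage of not needing the transformed equation to be uniformly parabolic (a point the paper asserts but which is delicate as $y\to-\infty$ when $q>1$ near the origin), at the price of having to run the Crandall--Ishii machinery for a degenerate operator yourself. One small slip: your displayed subsolution inequality has the sign reversed --- the DPP/It\^{o} computation you describe yields $0\le -\partial_t\varphi+\mathcal{L}\varphi-V\varphi$ at the maximum point, i.e.\ $\partial_t\varphi-\mathcal{L}\varphi+V(x_0)\varphi\le 0$ as in \eqref{eq.24}, not the opposite inequality; the derivation is right, only the final line is written backwards.
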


\begin{proof}
Let $u(x,t)$ be the solution of \eqref{eq.1} with noting that the existence of such solution defined by the Feynman-Kac formula \eqref{eq.5b} is guaranteed by Lemma \ref{Lem:2.4}. We aim to prove that this candidate function $u(x,t)$ is indeed a viscosity solution to \eqref{eq.1}. We split the proof up into five steps.\\
\textbf{Step 1. Boundedness of $u(x,t)$.\\}
Considering the assumptions for $V$ and $f$, and applying the Jensen's inequality we have
\begin{equation} \label{eq.6}
|u(x,t)| = \left| \mathbb{E}_{x}\left[e^{-\int_{0}^{t}V(X(s))ds}f(X(t)) \right]\right|\leq \mathbb{E}_{x}\left[e^{-\int_{0}^{t}V(X(s))ds}\left| f(X(t))\right| \right] \leq \|f\|_{C_b}
\end{equation}
uniformly in  $\overline{Q}_{T}$.\\
\textbf{Step 2. The stability of $X(t)$ and continuity of $u(x,t)$}.\\
Fix $t \in [0,T]$ and consider the sequences $x_n \to x$ as $n \to \infty $ with $x_n, x \in (0,\infty)$. Let $X_{x_n}(t)$ and $X_{x}(t)$ denote the solution (of \eqref{eq.1a}) starting from positions $x_n$ and $x$, respectively. Then for any integer $m\geq 2$ we have
\begin{align} \label{eq.7}
\sup_{s \leq t}|X_{x_n}(s)-X_{x}(s)|^{m} & \leq 3^{m-1} |x_n-x|^m \nonumber \\
& + 3^{m-1}\mu^m\sup_{s \leq t}\left|\int_0^s \left(X_{x_n}(r)^{p(X_{x_n}(r))}-X_{x}(r)^{p(X_{x}(r))}\right) dr\right|^{m} \nonumber \\
& + 3^{m-1} \sigma^m \sup_{s \leq t}\left|\int_0^s \left(X_{x_n}(r)^{q(X_{x_n}(r))}-X_{x}(r)^{q(X_{x}(r))}\right) dW(r)\right|^{m}.
\end{align}
Applying the Burkholder-Davis-Gundy (BDG) inequality and the H\"{o}lder inequality  together yields
\begin{align} \label{eq.8}
\mathbb{E}\left[\sup_{s \leq t}|X_{x_n}(s)-X_{x}(s)|^m\right] & \leq 3^{m-1} |x_n-x|^m \nonumber \\
& + 3^{m-1}\mu^m L^m t^{m-1} \int_0^t \mathbb{E}\left[\sup_{s \leq r}\left|X_{x_n}(r)-X_{x}(r)\right|^m\right] dr \nonumber \\
& + 3^{m-1} C_m\sigma^m L^m t^{(m-2)/2} \int_0^t \mathbb{E}\left[\sup_{s \leq r}\left|X_{x_n}(r)-X_{x}(r)\right|^m\right] dr \nonumber \\
& \leq 3^{m-1} |x_n-x|^m + \hat{L}\int_0^t \mathbb{E}\left[\sup_{s \leq r}\left|X_{x_n}(r)-X_{x}(r)\right|^m\right] dr.
\end{align}
where $\hat{L}=3^{m-1}L^m(\mu^m t^{m-1}+ C_m\sigma^m t^{(m-2)/2})$. Now, applying the Gronwall inequality yields
\begin{align} \label{eq.9}
\mathbb{E}\left[\sup_{s \leq t}|X_{x_n}(s)-X_{x}(s)|^m\right] & \leq 3^{m-1}e^{T\hat{L}} |x_n-x|^m ,
\end{align}
which means that $\sup_{s \leq t}|X_{x_n}(s)-X_{x}(s)|\to 0$ in $L^{m}$, and hence in probability, when $x_n \to x$.
Since we don't have a closed form for the solution $X(t)$ (of \eqref{eq.1a}), we will use joint continuity of the stochastic flow argument. Under the Lipschitz and linear growth conditions on the coefficients $\mu x^{p(x)}$ and $\sigma x^{q(x)}$ of \eqref{eq.1a} and the fact that the solution $X(t)$ is non-explosive in $(0,\infty)$, there exists a version of the solution $X(t)$ such that, for almost every sample path $\omega \in \Omega$, the map
\begin{align} \label{eq.10}
(x,t) \mapsto X_{x}(t;\omega)
\end{align}
is jointly continuous in $(x,t)$ a.s. \cite{kunita97}. Now, fix sample path $\omega \in \Omega$, and define the random function
\begin{align} \label{eq.11}
\Phi_{f}(x,t;\omega)= e^{-\int_{0}^{t}V(X_{x}(s;\omega))ds}f(X_{x}(t;\omega)).
\end{align}
Thus, considering the continuity of the maps
\begin{equation} \label{eq.12}
(x,t) \mapsto f(X_{x}(t;\omega)), \quad (x,t) \mapsto \int_{0}^{t}V(X_{x}(s;\omega))ds
\end{equation}
we concluded that for almost every sample path $\omega \in \Omega$, the function $(x,t) \mapsto \Phi_{f}(x,t;\omega)$ is continuous. On the other hand, since there exists $M>0$ such that $|f(x)|\leq M$ and $V(x)\geq 0$ for all $x \in (0, \infty)$, we have
\begin{align} \label{eq.13}
|\Phi_{f}(x,t;\omega)|\leq M \quad \text{uniformly in } (x,t;\omega).
\end{align}
Now, take any sequence $(x_n,t_n) \in \overline{Q}_{T}$ such that  $(x_n,t_n) \to (x,t) \in \overline{Q}_{T}$. By the continuity of $\Phi_{f}$ in $(x,t)$ we have $\Phi_{f}(x_n,t_n;\omega) \mapsto \Phi_{f}(x,t;\omega)$ for almost every sample path $\omega \in \Omega$. In conclusion, using the DCT, it follows
\begin{align} \label{eq.14}
\lim_{n \to \infty}u(x_n,t_n)=\lim_{n \to \infty} \mathbb{E}\left[\Phi_{f}(x_n,t_n;\omega)\right]=\mathbb{E}\left[\Phi_{f}(x,t;\omega)\right]=u(x,t).
\end{align}
\textbf{Step 3. Viscosity property of $u(x,t)$}.\\
Since the solution process $X(t)$ (of \eqref{eq.1a}) has the strong Markov property, $u(x,t)$ is bounded-continuous and $V(x)$ is nonnegative-continuous, we can use (DPP) to show $u(x,t)$ is both a viscosity sub- and supersolution.\\
We first show that $u(x,t)$ is a viscosity subsolution. Fix a point $(x_0,t_0) \in Q_T$ and let $\varphi \in C_{0}^{2,1}$ be a test function such that $u-\varphi$ has a local maximum at $(x_0,t_0)$. Note that, since $\varphi$ is a test function, we can replace $\varphi$ by $\varphi+C$ such that $u(x_0,t_0)=\varphi(x_0,t_0)$ which implies that $u(x,t) \leq \varphi(x,t)$ for all points $(x,t)$ near $(x_0,t_0)$.
More specifically, there exists $\rho>0$ such that on the open neighborhood
\begin{equation} \label{eq.15a}
U_{\rho}=(x_0-\rho,x_0+\rho) \times (t_0-\rho,t_0+\rho)
\end{equation}
we have
\begin{equation} \label{eq.16}
u(x,t)-\varphi(x,t)\leq u(x_0,t_0)-\varphi(x_0,t_0),\quad \forall (x,t) \in U_{\rho}.
\end{equation}
To make sure the process stays inside $U_{\rho}$, we define the stopping time
\begin{equation} \label{eq.16a}
\tau_\rho= h \wedge \inf\{s\in [0,h]:\, (X(s), t_0-s) \notin U_{\rho}\},\quad 0<h\leq \rho \leq t_0
\end{equation}
such that for every sample path $\omega \in \Omega$ we have
\begin{equation} \label{eq.16b}
u(X(\tau_\rho), t_0-\tau_\rho)-\varphi(X(\tau_\rho), t_0-\tau_\rho)\leq u(x_0,t_0)-\varphi(x_0,t_0)\quad \text{a.s.}
\end{equation}
Now, we argue by contradiction and assume that $u(x,t)$ is not a viscosity subsolution; that is,
\begin{equation} \label{eq.14a}
\partial_{t}\varphi(x_0,t_0)-\mathcal{L}\varphi(x_0,t_0)+V(x_0)\varphi(x_0,t_0)>0.
\end{equation}
Then applyin the DPP with stopping times gives
\begin{equation} \label{eq.15}
u(x_0,t_0) = \mathbb{E}_{x_0}\left[e^{-\int_{0}^{\tau_\rho}V(X(s))ds}u(X(\tau_\rho),t_0-\tau_\rho) \right]
\end{equation}
from which one can obtain
\begin{equation} \label{eq.17}
0\leq \mathbb{E}_{x_0}\left[e^{-\int_{0}^{\tau_\rho}V(X(s))ds}\varphi(X(\tau_\rho),t_0-\tau_\rho)-\varphi(x_0,t_0) \right].
\end{equation}
Define the process $Y(s):=\varphi(X(s),t_0-s)$ for $s \in [0,\tau_\rho]$. If we apply It\^{o}'s formula it reads
\begin{align} \label{eq.18}
dY(s)&=\left[-\partial_{t}\varphi+\mu X(s)^{p(X(s))}\partial_{x}\varphi +\frac{1}{2}\sigma^2 X(s)^{2q(X(s))}\partial^{2}_{x}\varphi \right](X(s),t_0-s)ds\nonumber\\
&+\frac{1}{2}\sigma^2 X(s)^{2q(X(s))}\partial_{x}\varphi(X(s),t_0-s) dW(s).
\end{align}
Integrating from $0$ to $\tau_\rho$, taking the expectation and considering the zero mean property gives
\begin{align} \label{eq.20}
& \mathbb{E}_{x_0}\left[\varphi(X(\tau_\rho), t_0-\tau_\rho) - \varphi(x_0, t_0)\right]\nonumber\\
&=\mathbb{E}_{x_0} \left[ \int_0^{\tau_\rho} \left( -\partial_{t}\varphi+\mu X(s)^{p(X(s))}\partial_{x}\varphi +\frac{1}{2}\sigma^2 X(s)^{2q(X(s))}\partial^{2}_{x}\varphi\right)(X(s), t_0-s) ds \right].
\end{align}
Put $z(\tau_\rho)=\int_{0}^{\tau_\rho}V(X(s))ds$ and $K(\tau_\rho)=e^{-z(\tau_\rho)}$. Applying the Taylor approximation to $K(\tau_{\rho})$ around zero gives
\begin{align} \label{eq.21}
e^{-\int_0^{\tau_{\rho}} V(X(s)) ds} \approx 1 - \tau_{\rho} V(x_0)+o(\tau_{\rho}),
\end{align}
where $o(\tau_{\rho})$ corresponds to terms of order 2 and higher. Plugging these information into \eqref{eq.17} and using \eqref{eq.20} gives
\begin{align} \label{eq.22}
0 & \leq \mathbb{E}_{x_0}\left[\varphi(X(\tau_{\rho}),t_0-\tau_{\rho})-\varphi(x_0,t_0) \right]-\mathbb{E}_{x_0}\left[\tau_{\rho}  c(x_0)\varphi(X(\tau_{\rho}),t_0-\tau_{\rho})\right]\nonumber\\
&+\mathbb{E}_{x_0}\left[o(\tau_{\rho})\varphi(X(\tau_{\rho}),t_0-\tau_{\rho})\right]\nonumber\\
& = \mathbb{E}_{x_0} \left[ \int_0^{\tau_{\rho}} \left( -\partial_{t}\varphi+\mu X(s)^{p(X(s))}\partial_{x}\varphi +\frac{1}{2}\sigma^2 X(s)^{2q(X(s))}\partial^{2}_{x}\varphi\right)(X(s), t_0-s) ds \right]\nonumber\\
& - \mathbb{E}_{x_0}\left[\tau_{\rho}  V(x_0)\varphi(X(\tau_{\rho}),t_0-\tau_{\rho})\right]+\mathbb{E}_{x_0}\left[o(\tau_{\rho})\varphi(X(\tau_{\rho}),t_0-\tau_{\rho})\right].
\end{align}
Note that the integrand and all other terms are continuous and bounded on the small time interval $(0,\tau_{\rho})$ and $\tau_{\rho}$ is a stopping time. Thus, dividing \eqref{eq.22} by $\tau_{\rho}$ and then letting $\rho \to 0^+$ (and hence $\tau_{\rho} \to 0^+$) as well as applying the DCT along with the MVT gives
\begin{align} \label{eq.23}
0 &  \leq \mathbb{E}_{x_0} \left[\lim_{\tau_{\rho} \to 0^+}\frac{1}{\tau_{\rho}} \int_0^{\tau_{\rho}} \left( -\partial_{t}\varphi+\mu X(s)^{p(X(s))}\partial_{x}\varphi +\frac{1}{2}\sigma^2 X(s)^{2q(X(s))}\partial^{2}_{x}\varphi\right)(X(s), t_0-s) ds \right]\nonumber\\
& - \mathbb{E}_{x_0} \left[\lim_{\tau_{\rho} \to 0^+}\left(V(x_0)\varphi(X(\tau_{\rho}),t_0-\tau_{\rho})\right)\right]+\mathbb{E}_{x_0}\left[\lim_{\tau_{\rho} \to 0^+} \frac{o(\tau_{\rho})}{\tau_{\rho}} \varphi(X(\tau_{\rho}),t_0-\tau_{\rho})\right]
\end{align}
which implies
\begin{equation} \label{eq.24}
\partial_{t}\varphi(x_0,t_0)-\mathcal{L}\varphi(x_0,t_0)+V(x_0)\varphi(x_0,t_0) \leq 0.
\end{equation}
However, this contradicts \eqref{eq.14a}. Thus, $u(x,t)$ must be a subsolution. Applying a similar argument at a local minimum point confirms that $u(x,t)$ is also a viscosity supersolution. Thus $u(x,t)$ is a bounded viscosity solution to \eqref{eq.1}.\\
\textbf{Step 4. Uniqueness of $u(x,t)$}.\\
We will use the comparison principle for viscosity solutions to show  the uniqueness of $u(x,t)$  \cite{cil92}. First note that \eqref{eq.1} is continuous in all its arguments, proper ($V(x)\geq 0$), has a bounded viscosity solution, and include a degenerate elliptic operator $\mathcal{L}$, which makes it a degenerate parabolic equation. Therefore, the comparison principle applies on $\mathbb{R}$. To do so, we transform \eqref{eq.1} to a uniformly parabolic equation via change of variables. More specifically, to handle the variable exponent diffusion coefficient $\sigma^2 x^{2q(x)}$ effectively, we transform to logarithmic coordinates
\begin{equation}\label{eq.27a}
y = \log(x), \quad v(y,t) := u(e^y, t).
\end{equation}
Under this log-transform, $x \mapsto y(x)=\log(x)$, \eqref{eq.1} turns into the uniformly parabolic equation
\begin{equation}\label{eq.27}
\begin{cases}
\begin{array}{rll}
\partial_t v &= A(y)\partial^2_y v+ B(y)\partial_y v - C(y)v,\,\, (y,t) \in \mathbb{R} \times (0,T),\\
v(y,0)&=f(e^y),\, y \in \mathbb{R},  \tag{{${\mathcal{{P}}_y}$}}
\end{array}
\end{cases}
\end{equation}
where
\begin{align}
A(y) =  \frac{1}{2}\sigma^2 e^{y(2q(e^y)-2)}, \quad  B(y)  = \mu e^{y(p(e^y)-1)}-A(y), \quad C(y)=V(e^y).
\end{align}
Since the log-transform is a $C^{\infty}-$ diffeomorphism between $(0,\infty)$ and $\mathbb{R}$; that is, it preserves the regularity back and forth. Thus, if $u(x,t)$ is a unique bounded viscosity solution of \eqref{eq.1}, then $v(y,t)$ is a unique bounded viscosity solution of \eqref{eq.27}, and vice versa.
Assume that $u_1, u_2$ are two different bounded viscosity solutions to \eqref{eq.1} with the same initial data, i.e.  $u_{i}(x,0)=f(x)$, $i=1,2$. Then the transformation gives $u_{i}(e^y,t)=v_{i}(y,t)$ and $v_{i}(y,0)=f(e^y)$. Since $v_{1}$ is a bounded viscosity solution, it is a bounded viscosity subsolution; and since $v_{2}$ is a bounded viscosity solution, it is a bounded viscosity supersolution. Then by the comparison principle, the solutions remain ordered for all time; that is, $v_{1}(y,t)\leq v_{2}(y,t)$ for all $(y,t) \in \mathbb{R} \times [0,T]$. If we swap the roles and proceed in the same fashion, we obtain $v_{2}(y,t)\leq v_{1}(y,t)$ for all $(y,t) \in \mathbb{R} \times [0,T]$. Therefore, $v_{1}(y,t)= v_{2}(y,t)$ for all $(y,t) \in \mathbb{R} \times [0,T]$. Finally, by transforming $v_{1}, v_{2}$ back, we conclude that $u_1(x,t)=u_2(x,t)$ for all $(x,t) \in \overline{Q}_{T}$.\\
\textbf{Step 5. Local Sobolev regularity.}\\
Let's rewrite \eqref{eq.1} as
\begin{equation} \label{eq.28}
\partial_{t}u(x,t)-\mathcal{L}u(x,t)=-V(x)u(x,t).
\end{equation}
Since $\gamma(x,t):=V(x)u(x,t)$ is bounded and continuous on a compact cylinder $\mathcal{Q} \subset (0,\infty) \times [0,T]$, $\gamma \in L^m(\mathcal{Q})$ for $m\geq 1$. The coefficients of the operator $\mathcal{L}$ are Lipschitz continuous and have linear growth, and hence, they're continuous and bounded on $\mathcal{Q}$. Moreover, the operator $\mathcal{L}$ is uniformly elliptic on $\mathcal{Q}$. In conclusion, by the regularity theory for parabolic equations (see e.g., \cite{lsu68}), the solution $u(x,t)$ of \eqref{eq.1} belongs to $ W_{m}^{2,1}(\mathcal{Q})$ with an a-priori estimate
\begin{equation} \label{eq.29}
\|u\|_{W_{m}^{2,1}(\mathcal{Q})}=\|u\|_{L^{m}(\mathcal{Q})}+\|\partial_tu\|_{L^{m}(\mathcal{Q})}+\|\partial_xu\|_{L^{m}(\mathcal{Q})}+\|\partial^2_xu\|_{L^{m}(\mathcal{Q})}<\infty.
\end{equation}
\end{proof}

\section{Numerical implementation}
In this section, we present a numerical validation of the theoretical link between a one-dimensional parabolic Cauchy problem \eqref{eq.1} and its corresponding stochastic differential equation \eqref{eq.1a}. We numerically solve \eqref{eq.1}  using a Crank--Nicolson finite difference scheme on a transformed log-grid. Concurrently, we simulate \eqref{eq.1a}  using an Euler-Maruyama scheme and compute the expectation from the Feynman--Kac formula via a Monte Carlo method. We compare the results for three cases of state-dependent exponents and find strong consistency, validating the stochastic representation \eqref{eq.5b}.

\subsection{Parameters}
The following single set of parameters are used for all three test cases to ensure a fair comparison as well as to isolate the impact of the variable exponent functions.
\begin{table}[h!]
\centering
 \begin{tabular}{||c c||}
 \hline
Parameter & Value \\ [0.5ex]
 \hline\hline
Drift constant & $\mu = 0.1$ \\
Diffusion constant & $\sigma = 0.2$ \\
Potential function & $V(x) = 0.1$  \\
Initial condition & $f(x) = e^{-0.1x}$ \\
Terminal time & $T = 1.0$ \\
Truncation domain & $x \in [0.1, 50.0]$ \\ [1ex]
 \hline
 \end{tabular}
 \caption{Physical parameters.}
\end{table}

\begin{table}[h!]
\centering
 \begin{tabular}{||c c c||}
 \hline
Parameter & PDE & SDE  \\ [0.5ex]
 \hline\hline
Spatial points & $N_x = 400$ & $N_x = 400$  \\
Time steps & $N_t = 400$ & $N_{\text{steps}} = 400$ \\
MC paths & --- & $N_{\text{paths}} = 20{,}000$ \\
Main variates & --- & 10,000 \\
Antithetic variates & --- & 10,000 \\
Spatial step & $\Delta y \approx 0.0156$ & --- \\
Time step & $\Delta t = 0.0025$ & $\Delta t_{\text{SDE}} = 0.0025$ \\ [1ex]
 \hline
 \end{tabular}
 \caption{Discretization parameters.}
\end{table}

\subsection{PDE Solver: Crank-Nicolson Scheme on a Log-Grid}
Solving \eqref{eq.1} directly on a uniform grid for $x$ is inefficient due to the semi-infinite domain and the non-linear nature of the coefficients. We employ the standard log-transform as we did in \eqref{eq.27a}, and use \eqref{eq.27} as the transformed PDE version of \eqref{eq.1} to use in the numerical experiments. The transformed PDE \eqref{eq.27} is solved using the \emph{Crank--Nicolson} scheme, which is unconditionally stable and second-order accurate in both time ($\Delta t$) and space ($\Delta y$). This produces a tridiagonal system at every time step.
Since the log-transform maps $(0,\infty)$ to $\mathbb{R}$, the computational (truncated) domain becomes $[y_{\min}, y_{\max}] = [\log(r), \log(R)]$. Thus, by homogeneous Neumann boundary conditions (BCs) $\partial_{x}u=0$  at the truncation boundaries $x=r, R$ are transformed into $\partial_{y}v=0$ at $y=\log(r), \log(R)$.
These boundary conditions are implemented using mirrored ghost points to maintain the second-order accuracy of the
finite difference scheme:
\begin{equation}
v_{-1} = v_1 \quad \text{and} \quad  v_{N_x} = v_{N_x-2}, \quad i = 0,1,\ldots,N_x-1.
\end{equation}
\begin{remark}\label{Rem:3}
Homogeneous Neumann BCs are the natural choice for this type of problems because the original problem on $(0,\infty)$ has no boundary conditions, so the truncation to $[r,R]$ requires conditions that don't impose artificial constraints, and they are fundamentally consistent with the Feynman-Kac representation, which derives the solution from the SDE without any boundary data, ensuring that the PDE and probabilistic methods yield the same result.
\end{remark}

\subsection{SDE Solver: Euler-Maruyama Monte Carlo}
\begin{enumerate}
    \item \textbf{Time discretization.} We simulate paths of \eqref{eq.1a} with the Euler-Maruyama scheme. A single path is generated by the iterative relation:
    $$
      X_{n+1} = X_n + \mu X_n^{p(X_n)} \Delta t + \sigma X_n^{q(X_n)} \sqrt{\Delta t}\, Z_n,
    $$
    where $Z_n \sim \mathcal{N}(0,1)$.

    \item \textbf{Monte Carlo estimation.} The expectation in \eqref{eq.5b} is approximated by averaging over $N_{\text{paths}}$ simulated paths. For each starting point $x$, the solution is estimated as:
   $$
      u(x,T) \approx \frac{1}{N_{\text{paths}}} \sum_{j=1}^{N_{\text{paths}}}
      \biggl[
        f\bigl(X_T^{(j)}\bigr)
        e^{\left(-\sum_{n=0}^{N_t-1} V\bigl(X_{t_n}^{(j)}\bigr)\,\Delta t \right)}
      \biggr].
  $$

    \item \textbf{Variance reduction.} To improve the stability and accuracy of the Monte Carlo estimate, antithetic variates and common random numbers techniques are used.
\end{enumerate}

\subsection{Analysis}

We compare three exponent configurations. We use the same procedure for all three cases: Solve the PDE with Crank-Nicolson using $p_i(x),q_i(x)$, simulate the SDE with Euler-Maruyama, and estimate the Feynman-Kac expectation with $N_{\text{paths}}=20,000$.\\

\noindent\textbf{Case 1:}  $p_1(x) = 1 + \dfrac{0.30}{1+x^{1.2}}$, \quad $q_1(x) = 1 + \dfrac{0.40}{1+x^{2}}$ (polynomial-decay).\\

\begin{figure}[H]
  \centering
  \includegraphics[width=0.95\textwidth]{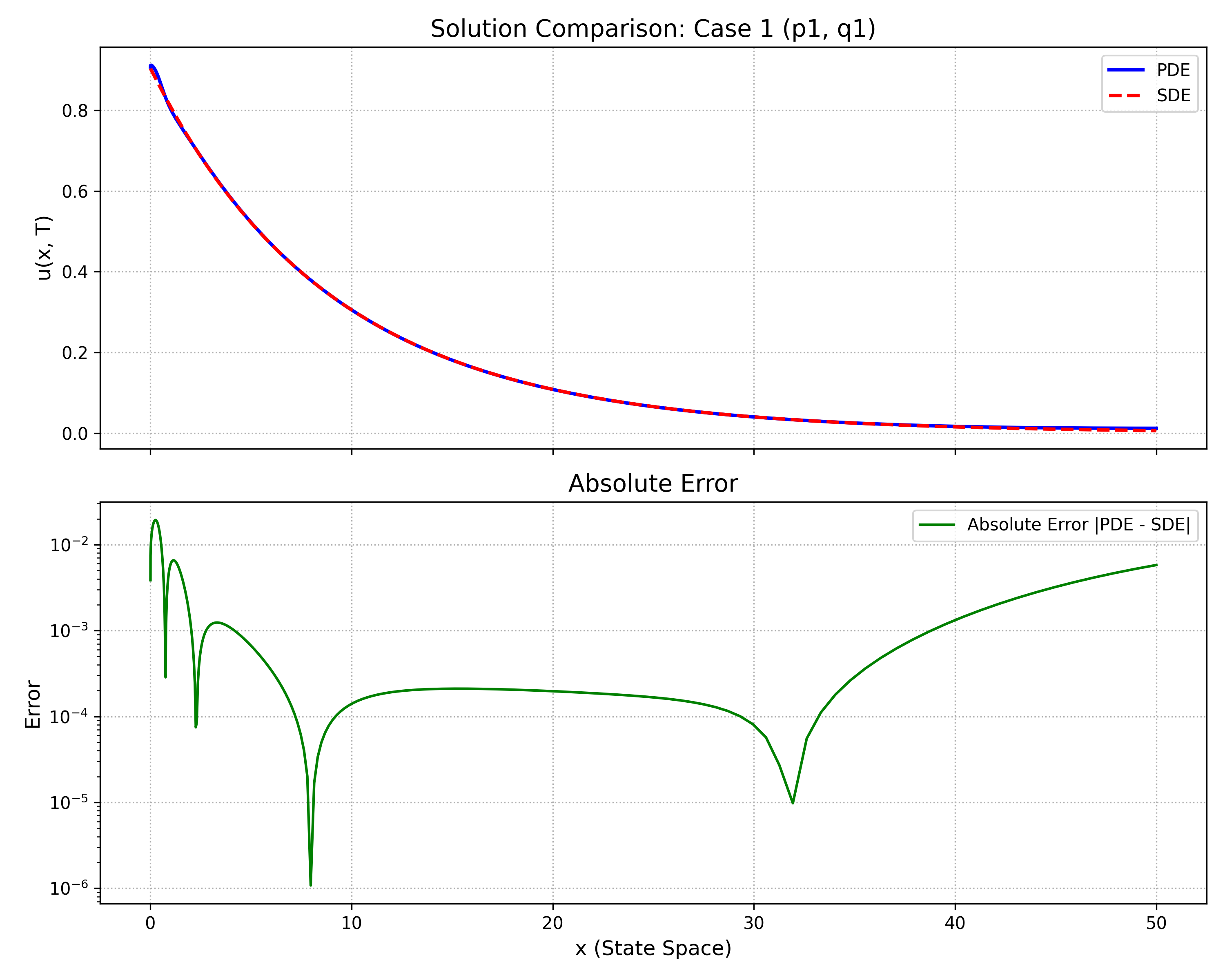}
  \caption{Comparison of PDE and SDE solutions for variable exponents $p_1(x)$ and $q_1(x)$.}
\end{figure}

The two solutions are plotted in Figure 1 (Top). The solid blue line (PDE solution) and the dashed red line (SDE Monte Carlo solution) are visually indistinguishable.
The bottom panel of Figure 1 shows the absolute error $|u_{\text{PDE}}-u_{\text{SDE}}|$. The error is on the order of $\approx 10^{-4}$, which is exceptionally low. This residual error is a combination of the PDE’s discretization error and the SDE’s time-discretization and statistical errors. The spiky nature of the error plot is characteristic of the underlying statistical noise from the Monte Carlo simulation. This result provides a strong numerical validation of the Feynman- Kac formula for this set of exponents.\\

\noindent\textbf{Case 2:} $p_2(x) = 1 + 0.2 e^{-x}$, \quad $q_2(x) = 1 + 0.3 e^{-x}$ (exponential-decay).\\

\begin{figure}[H]
  \centering
  \includegraphics[width=0.95\textwidth]{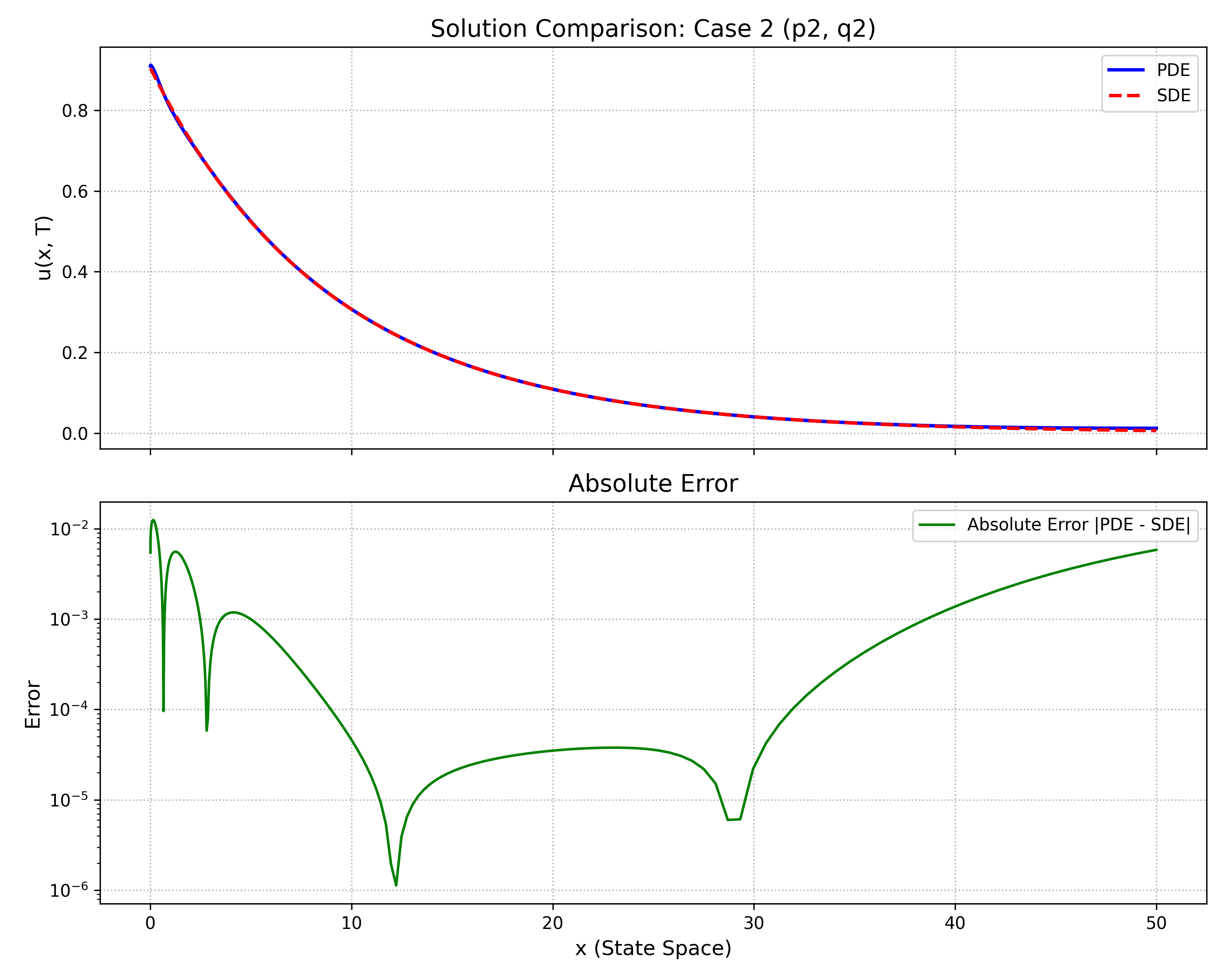}
  \caption{Comparison of PDE and SDE solutions for variable exponents $p_2(x)$ and $q_2(x)$.}
\end{figure}

As shown in Figure 2, the results are qualitatively identical to Case 1. The PDE and SDE solutions are in excellent agreement. The error, plotted in the bottom panel of Figure 2, is again on the order of $\approx 10^{-4}$. This demonstrates that the theoretical validation is robust and not dependent on a specific functional form for the variable exponents. The methods are accurate for both polynomial-decay
and exponential-decay exponents.\\

\noindent\textbf{Case 3:} $p_3(x) = q_3(x) = 1$ (GBM base model).\\

\begin{figure}[H]
 \centering
  \includegraphics[width=0.95\textwidth]{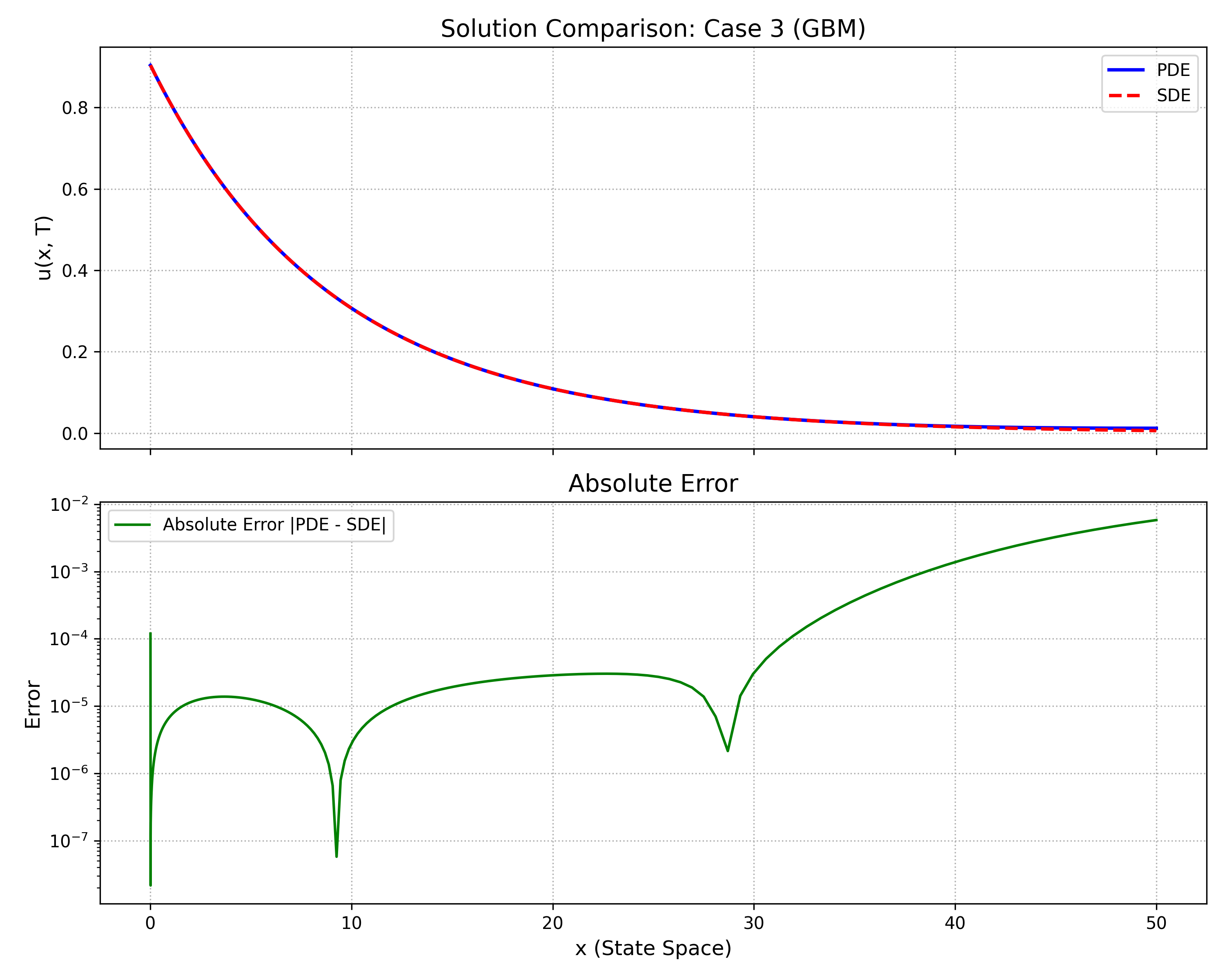}
  \caption{Comparison of PDE and SDE solutions for the baseline case $p_3(x)=q_3(x)=1$.}
\end{figure}

Figure 3 shows that the agreement between the PDE and SDE solutions remains consistent. Note that this case serves as a crucial control for our numerical methods.
The consistently low error magnitude $(\approx 10^{-4})$, matching that of the variable-exponent cases, strongly suggests that both the Crank-Nicolson and Euler-Maruyama solvers are correctly implemented. The errors observed in Cases 1 and 2 are not due to the variable-exponents, but rather reflect the intrinsic numerical accuracy of the methods. This reinforces the reliability of the overall computational framework.

\subsection{Conclusion}
This numerical experiment provides strong empirical evidence for Theorem \ref{Thrm:3.1}, which states that the bounded viscosity solution to \eqref{eq.1} is uniquely given by the Feynman-Kac representation \eqref{eq.5b}.
Across all three test cases, the solutions produced by the Crank-Nicolson PDE solver and those estimated via Monte Carlo SDE simulation show strong consistency, with absolute errors on the order of $\approx 10^{-4}$. This results suggest strong numerical support to the underlying theoretical framework.

\section{The contributions of the paper}

In this paper, we study a new class of parabolic Cauchy problem with variable exponent coefficients, and establish existence, uniqueness, and stochastic representation of viscosity solutions. Our main contributions are:
\begin{itemize}
 \item \textit{Generalization}: The proposed model, \eqref{eq.1a}, generalizes many well-know models. For example, if one let $p(x)=q(x)=1$, \eqref{eq.1a} becomes the GBM.
          Note that a key feature of ($\mathbf{h_2}$) is that $p(x) \to 1$ and $q(x) \to 1$ as $x \to \infty$. This implies our generalized model \eqref{eq.1a} behaves like the GBM for large $x$, but differently for small $x$. The proposed model also generalizes the CEV model if one let $p(x)=1$ and $ q(x)\geq 1$ (excluding the case $0\leq q(x)<1$).

    \item \textit{Well-posedness}: We prove that under suitable hypotheses on the variable exponents $p(\cdot), q(\cdot)$, including boundedness, differentiability, asymptotic behavior, and controlled derivatives, the Cauchy problem \eqref{eq.1} admits a unique bounded viscosity solution.

    \item \textit{SDE analysis}: We establish the existence, uniqueness, and positivity of strong solutions to the associated SDE \eqref{eq.1a} with state-dependent variable exponents. Crucially, we prove that solutions remain strictly positive and satisfy moment bounds, overcoming the technical challenges posed by the nonlinear coefficients.

    \item \textit{Feynman-Kac representation}: We demonstrate that the viscosity solution of \eqref{eq.1} admits the stochastic representation
   $$
    u(x,t) = \mathbb{E}_x\left[e^{-\int_0^t V(X(s))ds}f(X(t))\right],
    $$
    where $X(t)$ solves \eqref{eq.1a}. This extends the classical Feynman-Kac formula to the variable exponent setting.

    \item \textit{Numerical validation}: We design and implement comprehensive numerical experiments comparing finite difference solutions (Crank-Nicolson on logarithmic grids) with Monte Carlo simulations. Our experiments validate the theoretical results and provide quantitative error estimates across multiple variable exponent configurations.
\end{itemize}

\section*{Data usage statement}
\noindent All code used in the numerical experiments is original and requires no external data sources beyond standard scientific computing libraries of Python programming language. All figures are obtained purely from simulating the equations studied in the paper.

\section*{Acknowledgments}
\noindent This work was supported by Athabasca University Research Incentive Account [140111 RIA].
\section*{ORCID}
\noindent https://orcid.org/0000-0002-6001-627X

\end{document}